\numberwithin{equation}{section}
\newtheorem {Theorem}{Theorem}
\numberwithin{Theorem}{section}
\newtheorem {Lemma}[Theorem]    {Lemma}
\newtheorem {Proposition}[Theorem]{Proposition}
\newtheorem {Corollary}[Theorem]{Corollary}
\theoremstyle{definition}
\newtheorem{Definition}[Theorem]{Definition}
\theoremstyle{remark}
\newtheorem{Remark}[Theorem]{Remark}
\numberwithin{equation}{section}
\chardef\csname pre amssym.def at\endcsname=\the\catcode`\@
\def\undefine#1{\let#1\undefined}
\def\newsymbol#1#2#3#4#5{\let\next@\relax
 \ifnum#2=\@ne\let\next@\msafam@\else
 \ifnum#2=\tw@\let\next@\msbfam@\fi\fi
 \mathchardef#1="#3\next@#4#5}
\def\mathhexbox@#1#2#3{\relax
 \ifmmode\mathpalette{}{\m@th\mathchar"#1#2#3}%
 \else\leavevmode\hbox{$\m@th\mathchar"#1#2#3$}\fi}
\def\hexnumber@#1{\ifcase#1 0\or 1\or 2\or 3\or 4\or 5\or 6\or 7\or 8\or
 9\or A\or B\or C\or D\or E\or F\fi}
\font\teneufm=eufm10
\font\seveneufm=eufm7
\font\fiveeufm=eufm5
\newcommand{\Aa}{{\mathcal A}}
\newcommand{\Mm}{{\mathcal M}}
\newcommand{\no} {\noindent}
\newcommand {\pa} {\partial}
\newcommand{\Pp}{{\mathcal P}}
\newcommand {\rest}[1] {\big|_{#1}}
\newcommand{\Ss}{{\mathcal S}}
\def    \F      {{\mathbb F}}
\def    \C      {{\mathbb C}}
\def    \R      {{\mathbb R}}
\def    \Z      {{\mathbb Z}}
\def    \Q      {{\mathbb Q}}
\def    \12    {{\frac{1}{2}}}
\begin{document}


\setlength{\smallskipamount}{6pt}
\setlength{\medskipamount}{10pt}
\setlength{\bigskipamount}{16pt}





\title[Action Selectors and the Fixed Point Set]{Action Selectors and the
Fixed Point Set of a Hamiltonian Diffeomorphism}

\author[Wyatt Howard]{Wyatt Howard}

\address{Department of Mathematics, U.C. Santa Cruz,
Santa Cruz, CA 95064, USA}
\email{whoward@ucsc.edu}

\subjclass[2010]{37J10, 53D40, 70H12}
\keywords{Periodic orbits, Floer cohomology, spectral invariants}

\date{\today}


\begin{abstract}
In this paper we study the size of the fixed point set of a Hamiltonian diffeomorphism on a closed symplectic manifold which is both rational and weakly monotone.
We show that there exists a non-trivial cycle of fixed points whenever the action spectrum is smaller, in a certain sense, than required by the Ljusternik-Schirelman theory.
For instance, in the aspherical case, we prove that when the number of points in the action spectrum is less than or equal to the cup length of the manifold, then the cohomology of the fixed point
set must be non-trivial.  This is a consequence of a more general result that is applicable to all weakly monotone manifolds asserting that the same is true when the action selectors are related by an
equality of the Ljusternik-Schirelman theory.
\end{abstract}

\maketitle

\tableofcontents

\section{Introduction and main results}
\label{sec:intro}
\

In this paper we study the relationship between action selectors and the fixed point set for a Hamiltonian diffeomorphism defined on a symplectic manifold that is closed, rational, and weakly monotone.  We
are interested in understanding the size of the fixed point set for a time dependent Hamiltonian
whose action selectors satisfy a specific condition.
We use the Arnold Conjecture as a starting point for the statement of the main results of this paper.
The \textit{Arnold Conjecture} states that every Hamiltonian diffeomorphism $\phi_{H}$ of a compact symplectic manifold $(M, \omega)$ possesses at least as many fixed points as a function $f: M \rightarrow \R$ possesses critical points.  The weaker form of this conjecture asserts that the number of fixed points for $\phi_{H}$ is bounded below
by the cuplength of the manifold plus one, i.e.
$\# Fix(\phi_{H}) \geq CL(M) + 1$.
The \textit{$\F$-cuplength of $M$}, denoted $CL(M)$, of a topological space $M$ is the maximal integer $k$ such that there exists
classes $\alpha_{1}, \cdots , \alpha_{k}$ in the cohomology ring $H^{\ast > 0}(M; \F)$ satisfying
$$\alpha_{1} \cup \cdots \cup \alpha_{k} \neq 0.$$

While the Arnold Conjecture is still an open problem in the case when $M$ is a general rational, weakly monotone manifold, it has been proven in the symplectically aspherical case (\cite{floerCuplength}, \cite{hoferLjusternik}).
Suppose for the moment that $M$ is symplectically aspherical.
As is well known, one can use the basic properties and results concerning action selectors to prove the Arnold Conjecture when the Hamiltonian diffeomorphism has isolated fixed points, see e.g. \cite{ginzburgAction} and the references therein.  This is accomplished by using the spectrality properties of action selectors, meaning $c^{\alpha}(H) \in \Ss (H)$ where $\alpha \in H^{\ast}(M)$, $H$ is a Hamiltonian, $c^{\alpha}(H)$ denotes our action selector, and $\Ss (H)$ the action spectrum.  Using this fact, one is able to establish the following bound on the size of $\Ss (H)$:
$$\# \Ss(H) \geq CL(M) + 1 ,$$
\no which in turn implies $\# Fix(\phi_{H}) \geq CL(M) + 1$.  Now, when $H$ instead satisfies the condition
$\# \Ss (H) < CL(M) + 1$, it necessarily implies that the fixed point set
for $\phi_{H}$ cannot be isolated.  As a result, this presents us with the following question:
``How large'' is the set $Fix(\phi_{H})$ when $\# \Ss (H) < CL(M) + 1$ ?
\

\no This leads us to one of the main results of the paper.

\begin{Theorem}
\label{thm:aspherical}
Suppose that $(M, \omega)$ is a symplectic manifold, which is closed and symplectically aspherical
and $H$ is a time dependent Hamiltonian
with the property $\# \mathcal{S}(H) < CL(M) + 1$.  Let $F$ denote the set of fixed points for the Hamiltonian
diffeomorphism $\phi_{H}$.
Then $H^{j}(F) \neq 0$ for some $1 \leq j \leq 2n$.
\end{Theorem}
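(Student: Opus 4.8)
The strategy is to argue by contradiction: assume $F = \Fix(\phi_H)$ has trivial cohomology in all positive degrees, i.e. $H^j(F) = 0$ for $1 \le j \le 2n$, and derive $\#\Ss(H) \ge CL(M) + 1$, contradicting the hypothesis. The starting point is the usual chain of inequalities for action selectors: for cohomology classes $\alpha, \beta \in H^{*>0}(M)$ one has the triangle-type relation $c^{\alpha \cup \beta}(H) \le c^{\alpha}(H) + c^{\beta}(0)$ together with $c^{\alpha}(H) \ge c^{[M]}(H)$, so that in the standard situation a nonzero product $\alpha_1 \cup \cdots \cup \alpha_k \ne 0$ forces $c^{\alpha_1 \cup \cdots \cup \alpha_k}(H), c^{\alpha_1 \cup \cdots \cup \alpha_{k-1}}(H), \dots, c^{\alpha_1}(H), c^{1}(H)$ to be $k+1$ points of $\Ss(H)$, which gives the inequality. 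The whole point is that when $\#\Ss(H) < CL(M)+1$, forcing $k = CL(M)$, the pigeonhole principle makes (at least) two consecutive selectors in this chain coincide, say $c^{\alpha_1 \cup \cdots \cup \alpha_i}(H) = c^{\alpha_1 \cup \cdots \cup \alpha_{i+1}}(H)$.

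The next step is to convert this equality of selectors into a statement about fixed points. This is exactly the Ljusternik--Schnirelmann mechanism transported to Floer theory: an equality $c^{\gamma}(H) = c^{\gamma \cup \alpha}(H)$ for $\gamma \ne 0$ and $\alpha \in H^{*>0}(M)$ implies that the common critical value is a ``degenerate'' level, i.e. the set of $1$-periodic orbits (equivalently, fixed points of $\phi_H$, since in the aspherical case capping is essentially unique up to the action being well-defined) with that action value carries a nontrivial local invariant — concretely, the local Floer cohomology at that action value, which can be packaged as $\check{H}^{*>0}$ of the relevant piece of the fixed point set, is nonzero. In the aspherical setting the action functional $\mathcal A_H$ is single-valued on contractible loops, so the fixed points split according to their action value, and the claim is that the subset $F_c \subset F$ of fixed points with action $c$ must have nontrivial positive-degree Čech cohomology. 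Summing/assembling over the components and using that $F_c$ is closed in $F$ (and that $F$ is a compact metrizable space, so Čech cohomology behaves well), one concludes $H^{*>0}(F) \ne 0$, contradicting the assumption and proving the theorem.

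I would carry out the steps in this order: (1) recall the precise subadditivity and normalization axioms of the selectors $c^{\alpha}$ and record the monotone chain of $CL(M)+1$ values; (2) apply pigeonhole to extract a coincidence $c^{\gamma}(H) = c^{\gamma \cup \alpha}(H)$; (3) prove the key lemma that such a coincidence forces the fixed point set at that action level to be ``non-negligible'' in the sense of having nonvanishing Čech cohomology in positive degrees — this is the LS-theoretic heart and will be stated as a separate proposition applicable to all weakly monotone $M$, with the aspherical case as the clean special case where action values are unambiguous; (4) deduce $H^j(F) \ne 0$ for some $j \ge 1$, noting $j \le 2n$ since $F \subset M$ and $\dim M = 2n$.

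The main obstacle is step (3): making rigorous the passage from an equality of two action selectors to nontriviality of the topology of the fixed point set when $\phi_H$ is \emph{not} assumed nondegenerate or even to have isolated fixed points. One must work with a continuation/deformation argument — perturbing $H$ to a nondegenerate $\tilde H$, controlling how the selectors move, and analyzing the sublevel sets $\{\mathcal A_H < c\}$ versus $\{\mathcal A_H \le c\}$ in Floer homology — and then take a limit, using the stability of spectral invariants under $C^0$-small perturbations and a Gromov-type compactness to identify the limiting invariant with a genuinely topological quantity on $F_c$. The rationality and weak monotonicity hypotheses enter precisely here, to guarantee that Floer homology is defined, that the selectors satisfy the required axioms, and that bubbling does not destroy the local analysis; getting the bookkeeping of Novikov coefficients and the action filtration right is the technically delicate part.
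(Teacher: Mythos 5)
Your plan matches the paper's argument in structure and substance: build the monotone chain $c^{\tilde\alpha_m}(H)\le\cdots\le c^{\tilde\alpha_1}(H)\le c^{\tilde\alpha_0}(H)$ of $CL(M)+1$ selectors from a maximal cup-product representative, apply spectrality plus pigeonhole to $\#\Ss(H)\le CL(M)$ to extract a coincidence $c^{\gamma\cup\alpha}(H)=c^{\gamma}(H)$, and then invoke a key lemma (the paper's Theorem~\ref{thm:weakly}, valid for all weakly monotone rational $M$, with the aspherical case as a corollary) asserting that such a coincidence forces $\alpha|_F\neq 0$ and hence $H^{*>0}(F)\neq 0$. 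You correctly locate the difficulty in this last step; the paper's actual mechanism there is not local Floer cohomology but rather a perturbation by a $C^2$-small Hamiltonian $h$ vanishing on neighborhoods of the relevant action-level sets together with the triangle inequality and Alexander--Spanier tautness, but since you flag that step as a separate proposition to be proved, this is a presentational rather than a logical difference.
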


Theorem ~\ref{thm:aspherical} will actually become an almost immediate corollary once the following result has
been shown.

\begin{Theorem}
\label{thm:weakly}
Let $(M, \omega)$ be a closed, rational, and weakly monotone symplectic manifold, $H$ be a time dependent Hamiltonian that is one-periodic in time and define $F$ to be the fixed point set for the Hamiltonian diffeomorphism $\phi_{H}$.  Also assume that there exists cohomology elements $\alpha, \beta \in HQ^{\ast}(M)$, with $\alpha \neq 0$,
$\beta = \sum_{A} \beta_{A} e^{A}$ with each $deg( \beta_{A}) > 0$ and satisfying the condition $c^{\alpha \ast \beta}(H) = c^{\alpha}(H) - I^{c}_{\omega}(\beta)$.
\

\no Then $\beta \rest{ F} \neq 0$ in $HQ^{\ast}(F)$ and this implies $H^{k}(F) \neq 0$ for some $k > 0$.
\end{Theorem}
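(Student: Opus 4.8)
Since $M$ is closed, rational and weakly monotone, Floer cohomology $\HF^{*}(H)$ is defined and carries the action filtration; it is a module over $\HQ^{*}(M)$ isomorphic to $\HQ^{*}(M)$, and the spectral invariants $c^{\gamma}(H)$ are finite, continuous in $H$, spectral (so $c^{\gamma}(H)\in\mathcal{S}(H)$), and satisfy the module form of the triangle inequality $c^{\alpha\ast\beta}(H)\le c^{\alpha}(H)+c^{\beta}(0)$, with $c^{\beta}(0)=-I^{c}_{\omega}(\beta)$ by the definition of $I^{c}_{\omega}$; the hypothesis thus says precisely that this inequality is an \emph{equality}. The plan is to assume $\beta\rest{F}=0$ and to derive the strict inequality $c^{\alpha\ast\beta}(H)<c^{\alpha}(H)+c^{\beta}(0)$, a contradiction. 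The mechanism: writing $c=c^{\alpha}(H)$, the class $\alpha\ast\beta$ can sit at the extremal level $c+c^{\beta}(0)$ only through ``zero-energy'' perturbed pairs of pants, which by the energy identity are the solutions determined by the flow of a single point and so converge, at their two Hamiltonian ends, to one one-periodic orbit of $H$ while dragging the zero-Hamiltonian puncture onto that orbit; but $\beta$ may then be represented, in the zero-Hamiltonian complex, by a cycle supported away from all periodic orbits, which kills every such configuration.

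\textbf{Reducing to a neighbourhood of the periodic orbits.} Let $F_{0}\subseteq F$ be the set of points lying on a contractible one-periodic orbit of $H$. Since the Hamiltonian flow commutes with itself, $\phi_{H}(p)=p$ forces $\phi_{H}^{t}(p)=p$ for all $t$, so $F_{0}=\bigcup_{x}x(S^{1})$, the union over contractible one-periodic orbits $x$; and since $C^{0}$-close loops are homotopic, the free homotopy class of the orbit through $p$ is locally constant on $F$, so $F_{0}$ is open and closed in $F$, hence compact. Now $\beta\rest{F}=0$ gives $\beta_{A}\rest{F_{0}}=0$ for every $A$, so by compactness there is an open $U\supseteq F_{0}$ with $\beta_{A}\rest{U}=0$, and therefore, by Poincar\'e--Lefschetz duality, each $\mathrm{PD}(\beta_{A})$ is carried by a cycle contained in $M\setminus\overline{U_{0}}$ for some $F_{0}\subseteq U_{0}\Subset U$. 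Feeding these cycles into the (perturbed) zero-Hamiltonian Floer complex produces a cycle $\widehat{\beta}$ representing $\beta$ whose generators at the extremal level $c^{\beta}(0)$ all have underlying point in $M\setminus\overline{U_{0}}$.

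\textbf{Vanishing of the extremal level.} Pick a Floer cycle $\overline{\alpha}$ for $\alpha$ at level $c$ and form the product cycle $\mu(\overline{\alpha}\otimes\widehat{\beta})$, which represents $\alpha\ast\beta$ at level $\le c+c^{\beta}(0)$. A generator of $\mu(\overline{\alpha}\otimes\widehat{\beta})$ of action $c+c^{\beta}(0)$ arises from a perturbed pair of pants $u$ — with sphere bubbling under control because $M$ is weakly monotone and rational — whose inputs are a level-$c$ generator $\overline{x}$ of $\overline{\alpha}$ and a top-level generator $\widehat{p}$ of $\widehat{\beta}$; the energy identity $E(u)+\sum\omega(A_{i})=\mathcal{A}_{H}(\overline{x})+\mathcal{A}_{0}(\widehat{p})-\mathcal{A}_{H}(\mathrm{output})$ then forces $E(u)=0$ and the absence of bubbles. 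By unique continuation and removal of singularities, a zero-energy perturbed pair of pants is the solution determined by the flow of a single point, its two Hamiltonian ends converging to one one-periodic orbit $x$ of $H$ (necessarily contractible, as $\overline{x}$ is a Floer generator) and its zero-Hamiltonian end to the constant loop at the point underlying $\widehat{p}$, which therefore lies in $x(S^{1})\subseteq F_{0}\subseteq U_{0}$ — contradicting the fact that that point lies in $M\setminus\overline{U_{0}}$. Hence no such generator occurs, $\mu(\overline{\alpha}\otimes\widehat{\beta})$ lies strictly below level $c+c^{\beta}(0)$, and $c^{\alpha\ast\beta}(H)<c^{\alpha}(H)+c^{\beta}(0)=c^{\alpha}(H)-I^{c}_{\omega}(\beta)$, contrary to hypothesis. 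Therefore $\beta\rest{F}\neq0$ in $\HQ^{*}(F)$; and since $\beta\rest{F}=\sum_{A}(\beta_{A}\rest{F})e^{A}$ with each $\deg\beta_{A}>0$, some $\beta_{A}\rest{F}\neq0$ is a class of degree $\deg\beta_{A}$ with $1\le\deg\beta_{A}\le2n$, so $H^{k}(F)\neq0$ for that $k$.

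\textbf{The main obstacle.} The delicate point is that $H$ need not be nondegenerate, so $\HF^{*}(H)$, the product $\mu$, and the ``extremal level'' are only defined after a nondegenerate perturbation $\widetilde H\to H$. Two things must then be checked. First, one needs that every contractible one-periodic orbit of $\widetilde H$ sufficiently close to $H$ has image inside the chosen neighbourhood $U_{0}$ of $F_{0}$: this follows from an Arzel\`a--Ascoli argument for the orbit equation together with the remark that a loop $C^{0}$-close to a non-contractible one is itself non-contractible, so orbits cannot drift towards $F\setminus F_{0}$; this is exactly what lets the zero-energy argument run for $\widetilde H$ with $\widehat{\beta}$ still taken off $U_{0}$. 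Second, the strict inequalities obtained for $\widetilde H$ must survive the limit $\widetilde H\to H$, which requires a \emph{quantitative} gap rather than mere strictness. This is where local Floer cohomology $\HFL$ enters: the orbit set of $H$ at the action value $c+c^{\beta}(0)$ is an isolated invariant set (using rationality), its local Floer cohomology is a perturbation-independent model for the extremal ``layer'', and the argument above shows that $\beta$ acts trivially on it, forcing a definite drop $c^{\alpha\ast\beta}(H)\le c^{\alpha}(H)+c^{\beta}(0)-\eta_{0}$ with $\eta_{0}>0$. Assembling the filtered module structure, its compatibility with $\HFL$, and the Gromov-compactness and gluing behind the zero-energy identification — all standard but technical — is the remaining work.
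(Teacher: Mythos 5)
Your route is genuinely different from the paper's, and at its core there is a real gap. The paper never touches the pair-of-pants moduli spaces directly: it concatenates $H$ with a small, time-reparametrized Hamiltonian $f$ that vanishes on a neighborhood $U_\delta$ of the fixed points at the relevant action levels, shows via an Arzel\`a--Ascoli argument (Lemma~\ref{lemma:existence}) that the action spectrum near $a$ and $b$, hence the selectors $c^{\alpha}$ and $c^{\alpha\ast\beta}$, are unchanged for small $s'$, and then combines the soft triangle inequality $c^{\alpha\ast\beta}(H\#(s'f))\le c^{\alpha}(H)+c^{\beta}(s'f)$ with the classical strict Ljusternik--Schnirelmann bound $c^{LS}_{PD(\beta_A)}(s'f)<0$ to reach a contradiction. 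You instead attack the module/pair-of-pants structure at the chain level. Your first reduction --- $\beta\rest{F}=0$ lets you represent each $PD(\beta_A)$ by a cycle off a neighborhood $U_0$ of the set of points on contractible one-periodic orbits --- is sound and is, in spirit, the paper's construction of the function $h$; likewise your remark about controlling orbit drift under perturbation is exactly Lemma~\ref{lemma:existence}.

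The gap is in the part you defer as ``standard but technical''. The claim that equality in the triangle inequality forces a zero-energy pair of pants whose zero-Hamiltonian puncture must lie on a one-periodic orbit of $H$, and that the resulting strictness for a non-degenerate $\widetilde H$ survives the limit $\widetilde H\to H$ via a quantitative gap supplied by local Floer cohomology, is not established and is not routine in this generality. The perturbation one-form on a pair of pants interpolating between $H$, $0$, and $H$ cannot be taken flat, so the energy identity carries a curvature term and there is no literal zero-energy dichotomy without additional stretching estimates; if one instead defines the module action by an incidence constraint on the Floer cylinder, one still needs filtered compatibility and a compactness/gluing analysis that you do not supply. More seriously, the local-Floer-cohomology reduction for the degenerate case --- isolating the (possibly large and connected) set of orbits at one action value, showing $\beta$ acts trivially on the associated local group, and extracting a uniform gap $\eta_0>0$ --- is substantive work that would in effect re-derive the theorem. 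The paper's triangle-inequality route sidesteps all of this at the cost of one elementary Arzel\`a--Ascoli lemma, which is why it is the more economical path; your zero-energy intuition is a good heuristic explanation of why the equality case is rigid, but as written it is not a proof.
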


\no In the above theorem, we take $HQ^{\ast}(F) := H^{\ast}(F) \otimes \Lambda ^{\uparrow}_{\omega}$ and $I^{c}_{\omega}(\beta) := min \{ - \int _{A} \omega | \beta _{A} \neq 0 \}$; in Section \ref{sec:quantum} we explain this notation in more detail.

Now, when the symplectic manifold $M$ is symplectically aspherical the quantum cohomology groups, denoted above by $HQ^{\ast}(M)$, simply reduce to the usual cohomology groups.  As a result of this, the quantum product,
denoted by ``$\ast$'' above, reduces to being the cup product.  This implies that once we prove Theorem ~\ref{thm:weakly} we will end up with the following corollary:

\begin{Corollary}
\label{cor:aspherical}
Let $(M, \omega)$ be a closed symplectically aspherical manifold, $H$ be a time dependent Hamiltonian that is one-periodic in time and define $F$ to be the fixed point set for the Hamiltonian diffeomorphism $\phi_{H}$.  Suppose that there exists cohomology elements $\alpha, \beta \in H^{\ast}(M)$, with $\alpha \neq 0$ and $deg(\beta) > 0$ and satisfying the condition
$c^{\alpha \cup \beta}(H) = c^{\alpha}(H)$.
\

\no Then $\beta \rest{ F} \neq 0$ in $H^{\ast}(F)$ and implies that $H^{k}(F) \neq 0$ for $k = deg(\beta)$.
\end{Corollary}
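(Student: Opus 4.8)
The plan is to obtain Corollary~\ref{cor:aspherical} as a direct specialization of Theorem~\ref{thm:weakly}, using that on a symplectically aspherical manifold all quantum objects degenerate to classical ones. First I would recall that a closed symplectically aspherical $M$ satisfies $\omega|_{\pi_2(M)}=0$ and $c_1|_{\pi_2(M)}=0$, so the Novikov ring $\Lambda^{\uparrow}_{\omega}$ used in Section~\ref{sec:quantum} to build $HQ^{*}$ collapses to the ground field, concentrated in degree and area zero. Hence the tautological identifications $HQ^{*}(M)=H^{*}(M)\otimes\Lambda^{\uparrow}_{\omega}=H^{*}(M)$ and $HQ^{*}(F)=H^{*}(F)$ hold, and the quantum product $\ast$ on $HQ^{*}(M)$ coincides with the cup product $\cup$ on $H^{*}(M)$ (there being no nonconstant $J$-holomorphic spheres to contribute). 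In particular the classes $\alpha,\beta\in H^{*}(M)$ of the corollary are legitimately elements of $HQ^{*}(M)$, with $\alpha\neq 0$.

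Next I would check that the remaining structural hypotheses of Theorem~\ref{thm:weakly} are automatic. Writing $\beta$ in $HQ^{*}(M)=H^{*}(M)\otimes\Lambda^{\uparrow}_{\omega}$, the only surviving Novikov contribution is the unit $e^{0}$, so $\beta=\beta_0 e^{0}$ with $\beta_0=\beta$ homogeneous of degree $\deg(\beta)>0$; thus the requirement that $\beta=\sum_A\beta_A e^{A}$ with every $\deg(\beta_A)>0$ is satisfied. Moreover $I^{c}_{\omega}(\beta)=\min\{-\int_A\omega : \beta_A\neq 0\}=0$, since the only index present is $A=0$ and $-\int_0\omega=0$. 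Therefore the hypothesis $c^{\alpha\cup\beta}(H)=c^{\alpha}(H)$ of the corollary is literally the condition $c^{\alpha\ast\beta}(H)=c^{\alpha}(H)-I^{c}_{\omega}(\beta)$ demanded by Theorem~\ref{thm:weakly}.

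With all hypotheses in force, Theorem~\ref{thm:weakly} yields $\beta|_F\neq 0$ in $HQ^{*}(F)$, which by the identification above reads $\beta|_F\neq 0$ in $H^{*}(F)$. Finally, since $\beta$ is homogeneous of degree $\deg(\beta)$ and the restriction map $H^{*}(M)\to H^{*}(F)$ preserves degree, $\beta|_F$ lies in $H^{\deg(\beta)}(F)$; its non-vanishing gives $H^{k}(F)\neq 0$ for $k=\deg(\beta)$, as asserted. I do not anticipate any substantive obstacle here: the entire analytic content is carried by Theorem~\ref{thm:weakly}, and what remains is only the bookkeeping confirming that in the symplectically aspherical case $HQ^{*}$, the product $\ast$, and the number $I^{c}_{\omega}(\beta)$ all reduce to their classical analogues, together with the elementary fact that a homogeneous class restricts to a homogeneous class of the same degree.
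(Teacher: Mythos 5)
Your proposal is correct and follows essentially the same route the paper takes: the paper derives Corollary~\ref{cor:aspherical} from Theorem~\ref{thm:weakly} by observing that in the symplectically aspherical case $HQ^{*}$ reduces to $H^{*}$ and $\ast$ reduces to $\cup$, and you have simply spelled out the supporting bookkeeping (collapse of $\Lambda^{\uparrow}_{\omega}$, $I^{c}_{\omega}(\beta)=0$, homogeneity of $\beta|_F$) a bit more explicitly.
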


We would like to point out the similarity of Theorem ~\ref{thm:weakly} to a result due to Viterbo.
In \cite{viterboRemarks} he deals with the Morse theoretic analogue of action selectors known
as \textit{critical value selectors} defined by the equation
$$c^{\alpha}_{LS}(f) = inf\{ a \in \R | \alpha \neq 0 \, \text{in} \, H^{\ast}(M^{a}) \},$$
\no where $\alpha \in H^{\ast}(M)$, $M^{a} = \{ x \in M | f(x) \leq a \}$, and $f: M \rightarrow \R$ is at least $C^{1}$.
Viterbo looks at the connection between the critical points of the function $f$ and the
the critical value selectors.  He establishes that when $M$ is a Hilbert manifold, $f$ a $C^{1}$-function on $M$ satisfying the Palais-Smale condition, and
$\alpha, \beta \in H^{\ast}(M)$ with cup-product $\alpha \cup \beta \neq 0$ in $H^{\ast}(M^{a})$,
then $c^{\alpha \cup \beta}_{LS}(f) \leq c^{\alpha}_{LS}(f)$.
When the critical value selectors satisfy the equality $c^{\alpha \cup \beta}_{LS}(f) = c^{\alpha}_{LS}(f)$, then $\beta$ is nonzero on $H^{\ast}(F_{a})$, where $F_{a}$ is the
set of critical points of $f$ at level $a = c^{\alpha}_{LS}(f)$.  As a result, $dim(F_{a}) \geq deg(\beta)$
and hence $F_{a}$ is uncountable when $deg(\beta) \neq 0$.

\subsubsection{Organization of the paper}
In Section \ref{sec:prelimin} we discuss our notational conventions and relevant definitions.  Within Section \ref{sec:prelimin} we have included several subsections where we outline the various tools and basic results concerning them.  These subsections are meant to highlight the important features that will be used to
prove Theorem ~\ref{thm:aspherical} and Theorem ~\ref{thm:weakly}.  We do however make sure to point out useful references in order to aid the reader who is concerned with understanding their finer details.  Then, in
Section \ref{sec:proofs}, we provide the proofs to Theorem ~\ref{thm:aspherical} and Theorem ~\ref{thm:weakly}.

\subsection{Acknowledgments}
The author is very grateful to Viktor Ginzburg for posing the problem and for numerous useful discussions.
The author would also like to thank Richard Montgomery, Marta Bator\'eo, Yusuf G\"oren, Doris Hein, and Gabriel Martins for useful discussions.

\section{Preliminaries}
\label{sec:prelimin}

\subsection{Conventions and basic definitions}
The objective for this section of the paper is to set notation, definitions, and tools such as, filtered Floer homology, filtered Floer cohomology, quantum cohomology, the basics of the Ljusternik-Schnirelman theory, and Alexander-Spanier cohomology.

\subsubsection{Symplectic manifolds}
Throughout the paper we will assume that $(M, \omega)$ is a closed symplectic manifold, i.e. $M$ is compact and
$\pa M = \emptyset$.  The manifold $M$ is \textit{monotone} if
$[\omega] \rest{\pi_{2}(M)} = \lambda c_{1}(M) \rest{\pi_{2}(M)}$ for some non-negative constant $\lambda$, where $[\omega] (A)$ and $<c_{1}(M), A >$ denotes the integral of the symplectic
form and the first Chern class over the cycle $A \in \pi_{2}(M)$ respectively.    A
\textit{negative monotone} manifold satisfies the same condition, but with $\lambda \leq 0$.  The manifold $M$ is
\textit{rational} if $<[\omega], \pi_{2}(M)> = \lambda_{0} \Z$, where $\lambda_{0} \geq 0$.  Let $N$ be the positive generator of the discrete subgroup $<c_{1}(M), \pi_{2}(M)>$ of $\R$.  We call
$N$ the \textit{minimal Chern number}.  A symplectic manifold $M$ is said to be \textit{weakly monotone} if it is monotone or $N \geq n-2$, where $dim(M) = 2n$, which also includes when
$c_{1}(M)\rest{\pi_{2}(M)} = 0$.  When $M$ has the property $[\omega] \rest{\pi_{2}(M)} = 0 = c_{1}(M) \rest{\pi_{2}(M)}$, then $M$ is called \textit{symplectically aspherical}.
\

In this paper we will be working with time dependent Hamiltonians $H$.
More specifically, we are going to be dealing with Hamiltonians which are one-periodic in time, meaning
$H: S^{1} \times M  \rightarrow \R$ with $S^{1} = \R / \Z$ and $H_{t}(\cdot) = H(t, \cdot)$ for $t \in S^{1}$.  Let $X_{H}$ denote the time dependent vector field
that $H$ generates, where $X_{H}$ satisfies $i_{X_{H}}\omega = -dH$.
Let $\phi^{t}_{H}$ denote the time dependent flow for the vector field $X_{H}$.
In this paper we are interested in studying the
time-one map of $\phi^{t}_{H}$.  We call the map $\phi_{H} := \phi^{1}_{H}$ a \textit{Hamiltonian diffeomorphism}.
\

Let $K$ and $H$ be time dependent Hamiltonians, then we define $(K \# H)_{t} := K_{t} + H_{t} \circ (\phi^{t}_{K})^{-1}$.  The flow for the time dependent vector field generated by the Hamiltonian $K \# H$ is
the composition $\phi^{t}_{K} \circ \phi^{t}_{H}$.  As an aside, the composition $K \# H$ may not necessarily be
one-periodic in time.  If, however, $H_{0} = 0 = H_{1}$, then the composition is one-periodic.  One is able to impose
this condition on $H$ by reparametrizing $H$ as a function of time without changing its time-one map.  This allows us to treat $K \# H$ as a one-periodic Hamiltonian.

\subsection{Filtered Floer homology and filtered Floer cohomology}
\label{sec:Floer}
\subsubsection{Capped periodic orbits and filtered Floer homology}

In this section we begin by introducing the basics of Floer homology.  We plan on only presenting the basic elements of Floer homology.  For a more in depth discussion and for more on the specific details we refer the reader to
\cite{mcduffJHolomorphic}, \cite{hoferSymplectic}, \cite{banyagaMorse}.
\

We start by looking at the contractible loops $x: S^{1} \rightarrow M$.  Since $x$ is contractible we can attach a disk along the the boundary of the loop, which produces a new mapping $u: D^{2} \rightarrow M$ with
$u \rest{S^{1}} (t) = x(t)$.  We call the map $u$ a \textit{capping} of the loop $x$ and use the notation $\bar{x}$ to represent the pair $(x,u)$.  Let $u_{1}$ and $u_{2}$ be two cappings for the loop $x$.  The two cappings
are equivalent if the integrals of $\omega$ and $c_{1}(M)$ over the sphere formed by the connected sum
$u_{1} \# (-u_{2})$ is equal to zero.  In the symplectically aspherical case all cappings of a fixed loop $x$ are equivalent.  Let $\mathcal{P}(H)$ be the set of contractible one-periodic solutions to $X_{H}$ and  $\bar{\mathcal{P}}(H)$ be the set of contractible capped one-periodic solutions to $X_{H}$.
\

The cappings of these loops allows us to define the \textit{action functional} $\Aa_{H}$ for a time dependent Hamiltonian $H$.
For a capped loop $\bar{x} = (x,u)$ we define
$$\Aa_{H}(\bar{x}) = - \int_{u} \omega +  \int^{1}_{0} H_{t}(x(t))dt .$$

\no The critical points for the action functional are the equivalence classes of capped loops $\bar{x}$ which
are one-periodic solutions to the equation $\dot{x}(t) = X_{H}(t, x(t))$.  The set of critical values for the action
functional is called the \textit{action spectrum} of $H$ and is denoted by $\Ss (H)$.  The action spectrum is a set of measure zero.  In addition, when the manifold $M$ is rational, $\Ss (H)$ is a closed set and implies that $\Ss(H)$ is also a nowhere dense set (\cite{hoferSymplectic}).
\

Following the terminology used in \cite{salamonMorse}, we will call a capped one-periodic orbit $\bar{x}$ of $H$ \textit{non-degenerate} if the pushforward $d \phi_{H}: T_{x(0)}M \rightarrow T_{x(0)}M$ has no eigenvalues equal to one.  When all of the one-periodic orbits of $H$ are
non-degenerate, then we say $H$ is \textit{non-degenerate}.  Note that the condition of degeneracy does not depend on
the capping of the loop $x(t)$.
\

By fixing a field $\F$ (i.e. $\Z _{2}, \Q,$ or $\C$) we can use the Conley-Zehnder index, denoted
$\mu_{CZ}$, to impose a grading on the vector space that is generated by the elements in the set $\bar{\Pp}(H)$ over $\F$.
Define $CF^{(-\infty, b)}_{k}(H)$, for $b \in ( -\infty, \infty ]$ and $b$ not an element in the set $\Ss(H)$, to be the vector space of sums given by
$$\sum_{\bar{x} \in \bar{\Pp}(H)} a_{\bar{x}} \bar{x}, $$
\no with $a_{\bar{x}} \in \F$, $\mu_{CZ}(\bar{x}) = k$, $\Aa_{H}(\bar{x}) < b$, and the number of terms in the sum
with $a_{\bar{x}} \neq 0$ is semi-finite, meaning for every $c \in \R$ the number of terms with $a_{\bar{x}} \neq 0$ and $\mathcal{A}_{H}(\bar{x}) > c$ is finite.  There is a linear boundary operator
$\pa : CF^{(-\infty, b)}_{k}(H) \rightarrow CF^{(- \infty, b)}_{k-1}(H)$, where for $\bar{x} \in \bar{\Pp}(H)$ with
$\mu_{CZ}(\bar{x}) = k$ is defined to be
$$\pa \bar{x} = \sum_{\mu_{CZ}(\bar{y}) = k - 1} n(\bar{x}, \bar{y}) \bar{y} $$
\no and $\pa ^{2} = 0$.  When $\F = \Z _{2}$ the number $n(\bar{x}, \bar{y})$ counts the number of components in the $1$-dimensional moduli space $\Mm (\bar{x}, \bar{y})$ $\, mod \, \, 2$.  For a more general field $\F$, the number $n(\bar{x}, \bar{y})$ is a bit more involved to describe and we refer the reader to \cite{floerCoherent}.
One can further define $CF^{(a,b)}_{k}(H) :=  CF^{ (-\infty, b) }_{k}(H) / CF^{ (-\infty, a) }_{k}(H)$, for
$- \infty \leq a < b \leq \infty$ not in $\Ss(H)$.
The above construction results in what is known as the \textit{filtered Floer homology} of $H$ and is denoted by
$HF^{ (a,b) }_{\ast}(H)$.  Note when $(a,b) = (- \infty, \infty)$ we end up with the standard Floer
homology $HF_{\ast}(H)$.
\

Since the results of this paper deal with Hamiltonians that are degenerate, it is worth pointing out that filtered Floer homology can be defined in the degenerate case.  Take $H$ to be a Hamiltonian on $M$ with
$a,b \not \in \Ss (H)$ and $M$ to be a rational manifold.  By virtue of the fact that we can always find a non-degenerate Hamiltonian $\tilde{H}$ from an arbitrarily small perturbation of $H$ it allows us to define
$$HF^{(a,b)}_{\ast}(H) = HF^{(a,b)}_{\ast}(\tilde{H}) .$$

\subsubsection{Filtered Floer cohomology}

Now that the basics of Floer homology have been presented it then becomes a fairly straightforward process to explain the setup for the Floer cohomology.
\

We again take $H$ to be a non-degenerate Hamiltonian and define $CF^{k}_{(b, \infty )}(H)$, for $b \in [- \infty , \infty )$ with $b$ not an element in $\mathcal{S}(H)$, to be the \textit{filtered cochain complex}.  We take  $CF^{k}_{( b, \infty )}(H)$ to be the set of formal sums
$$ \sum_{\bar{x} \in \bar{\mathcal{P}}(H) } \alpha_{\bar{x}} \bar{x} $$
\no with $\alpha _{\bar{x}} \in \F$, $\mu _{CZ}(\bar{x}) = k$, $\mathcal{A}_{H}(\bar{x}) > b$, and satisfies the finiteness condition that for every $c \in \R$ the number of terms with $\alpha_{\bar{x}} \neq 0$ and
$\mathcal{A}_{H}(\bar{x}) < c$ is finite.
Also, using the same numbers $n(\bar{x}, \bar{y})$ from the Floer chain complex determines a linear coboundary operator
$\delta: CF^{k}_{( b, \infty)}(H) \rightarrow CF^{k + 1}_{( b, \infty)}(H)$, given by
$$\delta \bar{x} = \sum_{\mu_{CZ}(\bar{y}) = k + 1} n(\bar{x}, \bar{y}) \bar{y},$$
\no where $\bar{x} \in \bar{\Pp}(H)$, $\mu_{CZ}(\bar{x}) = k$, and satisfies $\delta ^{2} = 0$.
We define $CF^{k}_{(a,b)}(H) := CF^{k}_{( a, \infty )}(H)/ CF^{k}_{(b, \infty)}(H)$, for $- \infty \leq a < b \leq \infty$ which are not elements of $\Ss (H)$.  This results in giving us the \textit{filtered Floer cohomology} of $H$ and is denoted by $HF^{\ast}_{(a,b)}(H)$.
\

Just like the case of Floer homology,
we can also define the filtered Floer cohomology for a degenerate Hamiltonian $H$ by choosing a non-degenerate Hamiltonian $\tilde{H}$ that is close to $H$ and setting
$$ H^{\ast}_{(a,b)}(H) = H^{\ast}_{(a,b)}(\tilde{H}). $$

\subsection{Quantum homology and quantum cohomology}
\label{sec:quantum}
We begin this section by assuming that our symplectic manifold $M$ is both weakly monotone and rational.
Define $I_{\omega}(A) = - \int _{A} \omega$ and $I_{c_{1}}(A) = -2 <c_{1}(M), A>$, for $A \in \pi_{2}(M)$ and take
\[
\Gamma = \frac{\pi_{2}(M)}{kerI_{\omega} \cap ker I_{c_{1}}}
\]
We can then form the upwards and downwards Novikov rings by taking a kind of completions of the group $\Gamma$.
Let $\Lambda ^{\downarrow}_{\omega}$ denote the downward Novikov ring, which is defined to be
\[
\Lambda ^{\downarrow}_{\omega} = \{ \sum _{A \in \Gamma} a_{A}e^{A}  \, | \, a_{A} \in \Q, \# \{A \, | \, a_{A} \neq 0, \, I_{\omega}(A) > c \} < \infty \, \, \forall c \in \R \}.
\]
By tensoring the downwards Novikov ring with the homology groups $H_{\ast}(M)$ we can define the \textit{quantum homology} to be
$HQ_{\ast}(M) = H_{\ast}(M) \otimes _{\F} \Lambda ^{\downarrow}_{\omega}$.
The degree of the generator $x_{A} \otimes e^{A}$, which we denote simply by $x_{A}e^{A}$ for notational convenience, is given by $deg(x_{A} e^{A}) = deg(x_{A}) + I_{c_{1}}(A)$.
 \no We also define for any $x \in HQ_{\ast}(M)$ with $x = \sum _{A} x _{A} e^{A}$ the valuation map
$I^{h}_{\omega}(x) = max \{ I_{\omega}(A) \, | \, x_{A} \neq 0 \}$.
\

Define $\Lambda ^{\uparrow}_{\omega}$ to be the upwards Novikov ring, which is defined to be
\[
\Lambda ^{\uparrow}_{\omega} = \{ \sum _{A \in \Gamma} a_{A} e^{A} \, | \, a_{A} \in \Q, \# \{A \, | \, a_{A} \neq 0, \, I_{\omega}(A) < c \} < \infty \, \, \forall c \in \R \}
\]
When we tensor the upwards Novikov ring with the cohomology groups $H^{\ast}(M)$ we can define the \textit{quantum cohomology} to be
$HQ^{\ast}(M) = H^{\ast}(M) \otimes _{\F} \Lambda ^{\uparrow}_{\omega}$.
The degree of the generator $\alpha _{A} e^{A}$ is given by $deg(\alpha _{A} e^{A}) = deg(\alpha) + I_{c_{1}}(A)$.
Also, for any $\alpha \in HQ^{\ast}(M)$ with $\alpha = \sum _{A} \alpha_{A}e^{A}$ we define
$I^{c}_{\omega}(\alpha) = min \{ I_{\omega}(A) \, | \, \alpha _{A} \neq 0 \}$.
\

There is also a product structure defined on both the quantum homology and quantum cohomology that involves Gromov-Witten invariants.  Since we are primarily interested in the cohomology we will just present an outline for this case. The details for the quantum homology case as well as a detailed presentation of Gromov-Witten invariants can be found in \cite{mcduffJHolomorphic}.  Let $\alpha \in H^{k}(M)$, $\beta \in H^{l}(M)$, then the \textit{quantum cup product} of $\alpha$ with $\beta$ is given by
$$\alpha \ast \beta = \sum_{A} (\alpha \ast \beta)_{A} e^{A} ,$$
\no where $deg(\alpha \ast \beta) = deg(\alpha) + deg(\beta)$ and each of the cohomology classes $(\alpha \ast \beta)_{A} \in H^{k + l - 2 c_{1}(A)}(M)$ are defined by the
Gromov-Witten invariants $GW^{M}_{A \, , \, 3}$.  The invariants $GW^{M}_{A \, , \, 3}$ satisfy
$$\int_{c} (\alpha \ast \beta)_{A} = \int_{M} (\alpha \ast \beta)_{A} \cup \eta = GW^{M}_{A \, , \, 3}(a,b,c) ,$$
\no where $c \in H_{k + l - 2c_{1}(A)}(M)$, $a = PD(\alpha)$ \footnote{Here, and in throughout the rest of the paper, the notation ``$PD$'' stands for the Poincar\'e dual.}, $b = PD(\beta)$, $c = PD(\eta)$ and
$deg(a) + deg(b) + deg(c) = 4n - 2 c_{1}(A).$  When this degree condition is not met, then
$GW^{M}_{A \, , \, 3}(a,b,c) = 0$.  Also, when $c_{1}(A) = 0$ then $(\alpha \ast \beta)_{A}$ reduces to the cup product $\alpha \cup \beta$.
\

We are also interested in establishing a form of Poincar\'e duality between the quantum cohomology and homology in a similar manner to what was done in \cite{ohConstruction}.
We can construct an isomorphism between the quantum homology and quantum cohomology by the following map
\[
\flat: HQ^{\ast}(M) \rightarrow HQ_{\ast}(M), \, \, \text{where} \, \, \sum_{A} \alpha _{A}e^{-A} \, \mapsto \, \sum_{A} PD( \alpha _{A})e^{A}
\]
\no and has inverse
\[
\# : HQ_{\ast}(M) \rightarrow HQ^{\ast}(M), \, \, \text{where} \, \, \sum _{A} x_{A} e^{A} \, \mapsto \, \sum _{A} PD(x_{A})e^{-A}.
\]

\no There is also the following relationship between $I^{c}_{\omega}$ and $I^{h}_{\omega}$ given by
$I^{h}_{\omega}(x) = - I^{c}_{\omega}(PD(x))$ for $x \in HQ_{\ast}(M)$, where we denote $PD(x) = \sum _{A} PD(x_{A}) e^{-A}$.

\subsection{The classical Ljusternik--Schnirelman theory: critical value selectors and action selectors}
In order to prove Theorems ~\ref{thm:aspherical} and \ref{thm:weakly} we will use tools from the
Ljusternik-Schnirelmann theory known as critical value selectors and action selectors.  The action selectors, also
known as spectral invariants in the literature, are the Floer theoretic version of critical value selectors.

\begin{Definition}[Critical Value Selectors]
\

Let $M$ be a $n$-dimensional manifold and $f \in C^{\infty}(M)$.  For any $u \in H_{\ast}(M)$ we define
the \textit{critical value selector} by the formula

\begin{align*}
c^{LS}_{u}(f) &= inf \{a \in \R | u \in im(i^{a}) \} \\
              &= inf \{a \in \R | j^{a}(u)  = 0   \},
\end{align*}

\no where $i^{a}:H_{\ast}(\{x \in M | f(x) \leq a \}) \rightarrow H_{\ast}(M)$ and
$j^{a}: H_{\ast}(M) \rightarrow H_{\ast}(M, \{ x \in M | f(x) \leq a \})$ are the natural ``inclusion'' and ``quotient'' maps respectively.
\end{Definition}

\begin{figure}
 \def\svgwidth{.36 \columnwidth}
  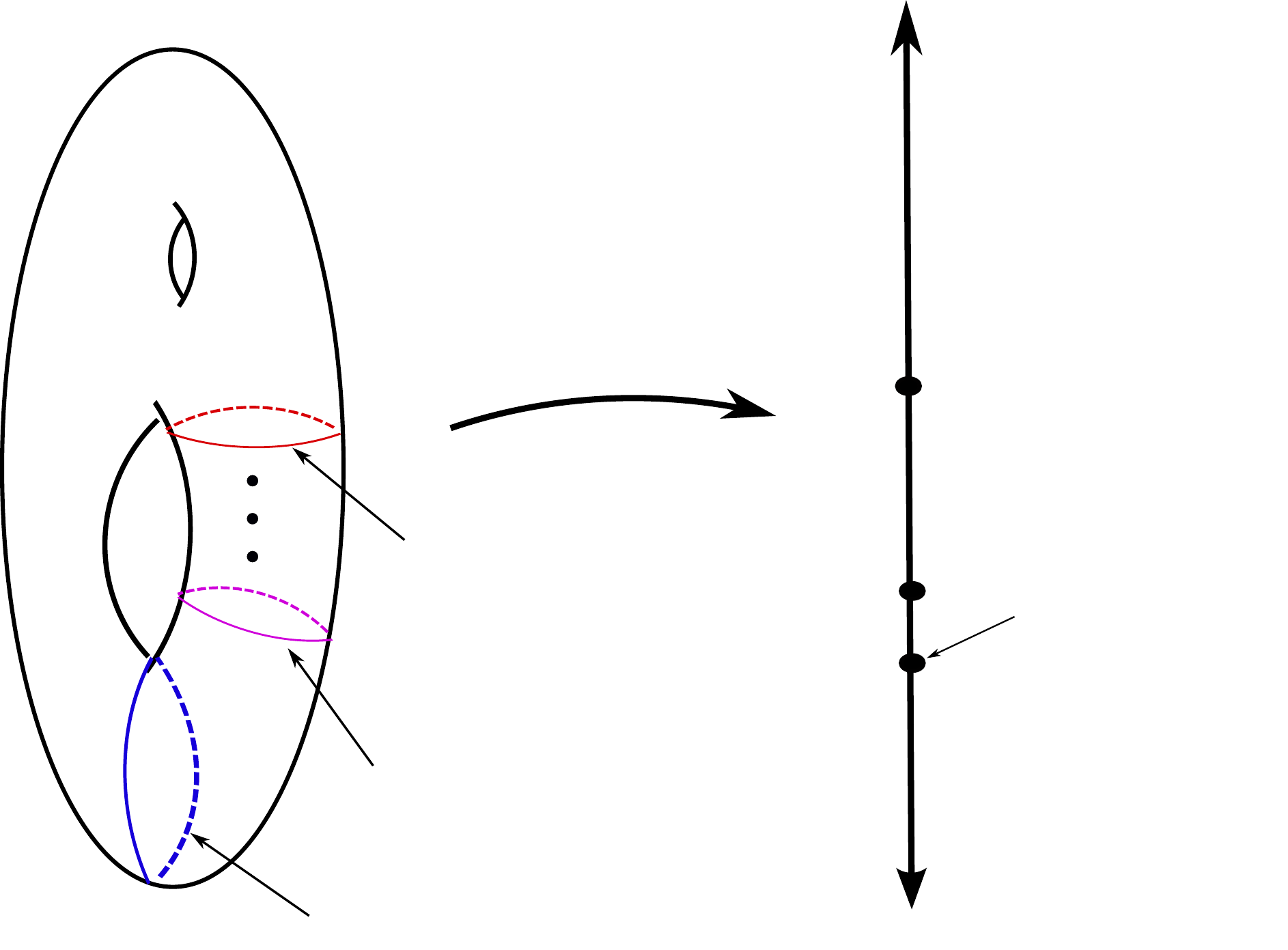 
  \caption{Critical value selector}
  \label{fig:critical}
\end{figure}

One can think of the critical value selectors geometrically in terms of minimax principles.  Take a nonzero
homology class $u \in H_{\ast}(M)$, then one can think of $c^{LS}_{u}(f)$ to be the maximum value $f$ takes on
any representative cycle $u'$ that has been ``pushed down'' as far as possible within
the manifold $M$, see Figure \ref{fig:critical}.  So, when $f$ is a Morse function then we can write
$$c^{LS}_{u}(f) = min \, \ max_{[u'] = u} \{ f(x) \, | \, x \in u' \}.$$
\

The following is a listing of some useful properties concerning critical value selectors.
\begin{itemize}
\item By definition, $c^{LS}_{0}(f) = - \infty$. When $f \equiv \text{const}$ then
         $c^{LS}_{u}(f) \equiv \text{const}$ as well, and for any nonzero $\lambda \in \F$, $c^{LS}_{\lambda u}(f) = c^{LS}_{u}(f)$. For any function $f$ we have
         $$c^{LS}_{1}(f) = min(f) \leq c^{LS}_{u}(f) \leq max(f) = c^{LS}_{[M]}(f).$$

\item Continuity: $c^{LS}_{u}(f)$ is Lipschitz with respect to the $C^{0}$-topology.
\item Triangle Inequality: $c^{LS}_{u \cap w}(f + g) \leq c^{LS}_{u}(g) + c^{LS}_{w}(g)$.
\item Criticality or minimax principle: $c^{LS}_{u}(f)$ is a critical value of $f$.
\item $c^{LS}_{u \cap w}(f) \leq c^{LS}_{u}(f)$, also, if $w \neq [M]$ and the critical points of $f$ are
         isolated, we have strict inequality $c^{LS}_{u \cap w}(f) < c^{LS}_{u}(f)$.
\end{itemize}

\subsubsection{The Hamiltonian Ljusternik--Schnirelman theory: action selectors}
In this section we present the definition and outline the fundamental properties pertaining to action selectors
on cohomology.  The action selectors are defined in a somewhat similar manner, where one big difference is the function $f: M \rightarrow \R$ is replaced by the action functional $\Aa _{H}$ for some Hamiltonian $H$.  There are
numerous sources on the subject of spectral invariants.  Some of the first instances concerning the theory
can be found in \cite{hoferSymplectic}, \cite{viterboSymplectic}.  A thorough treatment of the symplectically aspherical case can be found in \cite{schwarzAction}.  Other known sources can be found
in \cite{hoferSymplectic}, \cite{viterboSymplectic},
\cite{entovCalabi}, \cite{entovRigid}, \cite{ginzburgWeinstein}, \cite{ginzburgAction}, \cite{mcduffJHolomorphic}.  In our paper we will be primarily following the definitions and results found in \cite{ohConstruction}.

\begin{Definition}[Action Selectors on Cohomology]
For any nonzero element $\alpha \in HQ^{\ast}(M) \cong HF^{\ast}(H)$ we define the
\textit{action selector on cohomology} by the formula
\begin{align*}
c^{\alpha}(H) &= inf \{a \in \R - \Ss(H) | PD(\alpha) \in im(i^{a}_{\ast}) \} \\
              &= inf \{a \in \R - \Ss(H) | j^{a}_{\ast}(PD(\alpha)) = 0  \},
\end{align*}

\no where $i^{a}_{\ast}: HF^{(- \infty, a)}_{\ast}(H) \rightarrow HF_{\ast}(H)$ and
$j^{a}_{\ast}: HF_{\ast}(H) \rightarrow HF^{(a, \infty)}_{\ast}(H)$ are the ``inclusion'' and
``quotient'' maps respectively.
\end{Definition}

When $H$ is a non-degenerate Hamiltonian we can write
$$c^{\alpha}(H) = \inf_{[\sigma] = a} \Aa _{H}(\sigma),$$
where $a = PD(\alpha)$ and $\Aa _{H}(\sigma) = max \{ \Aa _{H}(\bar{x}) | \sigma_{\bar{x}} \neq 0 \}$
for $\sigma = \sum \sigma_{\bar{x}} \bar{x} \in CF_{\ast}(H)$.
Just like critical value selectors, one can formulate a geometrical interpretation of the actions selectors, where
they take the various capped one-periodic orbits representing a particular cohomology class and push the ``energy'' down as far as possible.
\

From the above definitions we point out some of their useful properties.
\begin{itemize}
\item Projective invariance: $c^{\lambda \alpha}(H) = c^{\alpha}(H)$ for any $\lambda \in \Q$, $\lambda \neq 0$.

\item Symplectic invariance: $c^{\alpha}(\phi ^{\ast} H) = c^{\alpha}(H)$ for any symplectic
      diffeomorphism $\phi$.

\item Lipschitz continuous: $c^{\alpha}$ is Lispschitz continuous in the $C^{0}$-topology on the space of Hamiltonians $H$.  In particular, $|c^{\alpha}(H) - c^{\alpha}(K)| \leq \| H - K \|$, where
    $\| \cdot \|$ is the Hofer norm.

\item Triangle inequality: $c^{\alpha \ast \beta}(H \# K) \leq c^{\alpha}(H) + c^{\beta}(K)$.

\item Hamiltonian shift: $c^{\alpha}(H + a(t)) = c^{\alpha}(H) + \int^{1}_{0} a(t)dt$, where
      $a: S^{1} \rightarrow \R$.

\item Homotopy invariance: Let $H$ and $K$ be two Hamiltonians which are homotopic to each other, then we have $c^{\alpha}(H) = c^{\alpha}(K)$, for all $\alpha \in QH^{\ast}(H)$.

\item Quantum shift: $c^{a \alpha}(H) = c^{\alpha}(H) - I^{c}_{\omega}(a)$, where $a \in \Lambda ^{\uparrow}_{\omega}$.

\item Valuation inequality: $c^{\alpha + \beta}(H) \leq max\{ c^{\alpha}(H), c^{\beta}(H) \}$ and, the inequality is strict if $c^{\alpha}(H) \neq c^{\beta}(H)$.

\item Spectrality: When $M$ is a rational manifold and $H$ is a one-periodic Hamiltonian on $M$,
                   then $c^{\alpha}(H) \in \Ss (H)$.
\end{itemize}

Let $\widetilde{\mathcal{H}am}(M, \omega)$ be the universal covering space for the group of Hamiltonian diffeomorphisms $\mathcal{H}am(M, \omega)$.  It is worth mentioning that one can also look at the action selectors $c^{\alpha}$ as functions from $\widetilde{\mathcal{H}am}(M, \omega)$ to the reals (\cite{ohConstruction}).

\begin{Remark}
We also point out that one can define the action selectors on the homology of $M$ for any Hamiltonian $H$.  In the non-degenerate case one can define the action selector on the elements $u \in HQ_{\ast}(M)$ by
$c_{u}(H) = \inf_{[\sigma] = u} \Aa _{H}(\sigma)$ for $\sigma = \sum a_{\bar{x}} \bar{x} \in CF_{\ast}(M)$.  The action selectors on homology also satisfy similar properties to the ones defined on the cohomology.  The details of
which are outlined in \cite{ginzburgAction} and \cite{ohConstruction}.  There is one property in particular which interests us: $c_{u}(H) = c^{LS}_{u}(H)$ for $u \in H_{\ast}(M)$ and for $H$ an autonomous and $C^{2}$-small Hamiltonian.  Also, based on the definitions for action selectors on cohomology and homology we see they share the relationship $c^{\alpha}(H) = c_{PD(\alpha)}(H)$.  Putting these two facts together we end up with $c^{\alpha}(H) = c^{LS}_{PD(\alpha)}(H)$ when $H$ is autonomous and $C^{2}$-small.
\end{Remark}

\subsection{Alexander-Spanier cohomology}
\label{sec:Spanier}
Our last preliminary that needs to be introduced is a version of cohomology due to J.M. Alexander and E.H. Spanier.
We will be primarily following the exposition given in \cite{hoferSymplectic}, \cite{masseyTopology}, \cite{spanierTopology}.
\

Begin by fixing a subspace $A \subset M$ and define $\mathcal{O}_{A}$ to be the set of all open neighborhoods of the subset $A$.  One is then able to define an ordered structure on this set in the following manner: for $U, V \in \mathcal{O}_{A}$ we say $U \leq V$ if and only if $V \subseteq U$.  We call $(\mathcal{O}_{A}, \leq)$ the \textit{directed system of neighborhoods for the set $A$}.
\

Now let $\mathcal{C}$ be the category of all subspaces of the manifold $M$ and the category $\mathcal{A}$ to be an algebraic category, which, for our purposes, will either be the category of abelian groups, the category of commutative rings, or the category of modules over a fixed ring.  Define a continuous functor $H: \mathcal{C} \rightarrow \mathcal{A}$ that takes continuous maps $f: V \rightarrow U$, for $U, V \in \mathcal{C}$ and maps
it to a homomorphism $H(f): H(U) \rightarrow H(V)$.  If $U \leq V$ we can define the inclusion map
$i_{VU}: H(U) \rightarrow H(V)$.  From any directed system $\mathcal{O}_{A}$ we define
$D_{A} := \bigoplus_{U \in \mathcal{O}_{A}} H(U)$ and the homomorphism $j_{U}: H(U) \rightarrow D_{A}$ as the inclusion map into the $U$-th component of $D_{A}$.  Next take $K_{A}$ to be the subring that is generated by elements of the form $j_{U}(\alpha_{U}) - j_{V}i_{VU}(\alpha_{U})$ for $U \leq V$, $\alpha_{U} \in H(U)$.  We denote the
quotient of $D_{A}$ by $K_{A}$ by $dir \, \lim_{U \in \mathcal{O}_{A}} H(U) := D_{A} / K_{A}$, which we call
the \textit{direct limit of $A$}.
\

We define $\bar{H}^{\ast}(A ; \Z) := dir \, \lim_{U \in \mathcal{O}_{A}} H^{\ast}(U ; \Z)$ to be the \textit{Alexander-Spanier cohomology} for the subspace $A \subseteq M$.  $H^{\ast}(U ; \Z)$ is the usual singular cohomology.  The restriction maps from $H^{k}(U ; \Z)$ to $H^{k}(A ; \Z)$ end up defining a natural homomorphism from $\bar{H}^{k}(A ; \Z)$ to $H^{k}(A ; \Z)$.  When this homomorphism is an isomorphism that holds for all $k$ and any coefficient group, then we say the subspace $A$ is \textit{taut in $M$}.  The following result gives us a useful list of criteria for when $A$ will be taut in the manifold $M$.

\begin{Theorem}
\label{thm:taut}
In each of the following four cases the subspace $A$ is taut in $M$:
\begin{itemize}
\item $A$ is compact and $M$ is Hausdorff.

\item $A$ is closed and $M$ is paracompact Hausdorff.

\item $A$ is arbitrary and every open subset of $M$ is paracompact Hausdorff.

\item $A$ is a retract of some open subset of $M$.
\end{itemize}
\end{Theorem}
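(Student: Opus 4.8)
The plan is to recognize Theorem~\ref{thm:taut} as the classical tautness theorem for Alexander--Spanier (equivalently \v{C}ech) cohomology, for which complete proofs appear in \cite{spanierTopology} (Chapter~6), \cite{masseyTopology}, and the appendix of \cite{hoferSymplectic}; what follows is the shape of the argument I would give. The content is that the restriction-induced map
\[
\bar H^{\ast}(A;\F)=\varinjlim_{U\in\mathcal O_{A}}H^{\ast}(U;\F)\longrightarrow H^{\ast}(A;\F)
\]
is an isomorphism in each of the four cases and for every coefficient group. First I would discard the direct-limit bookkeeping in favor of the intrinsic cochain model: let $\bar C^{\ast}(X)$ be the complex of all $\F$-valued functions on ordered tuples of points of $X$, and $\bar C^{\ast}_{0}(X,A)\subseteq\bar C^{\ast}(X)$ the subcomplex of cochains that vanish identically near $A$. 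A routine germ argument identifies the cohomology of $\bar C^{\ast}(X)/\bar C^{\ast}_{0}(X,A)$ with $\varinjlim_{U}H^{\ast}(U)$ and that of $\bar C^{\ast}(A)$ with $H^{\ast}(A)$, so the theorem becomes the assertion that restriction of cochain germs from a neighborhood of $A$ down to $A$ is a quasi-isomorphism.

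Next I would reduce this to two extension lemmas: (i) every Alexander--Spanier cochain on $A$ is the restriction of a cochain defined on some honest neighborhood $U\in\mathcal O_{A}$; and (ii) if a cochain on some $U\in\mathcal O_{A}$ restricts on $A$ to a cocycle that is the coboundary of a locally-zero cochain, then it is already such a coboundary on some smaller $V\in\mathcal O_{A}$. Granting (i) and (ii), surjectivity and injectivity of the displayed map follow by a diagram chase. The content of (i) and (ii) is purely point-set-topological, and this is precisely where the hypotheses enter: when $A$ is compact in a Hausdorff $M$, or closed in a paracompact Hausdorff $M$, one spreads a formula valid ``near $A$'' out to a full neighborhood by a shrinking/partition-of-unity argument, normality together with paracompactness supplying the uniform room to pass from locally valid data on $A$ to a single cochain on a neighborhood. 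The third case reduces to the second after replacing $M$ by a paracompact open neighborhood of $A$. In the fourth case a retraction $r\colon U_{0}\to A$ makes $r^{\ast}$ a right inverse, at the level of cochain germs, to the restriction $H^{\ast}(U_{0})\to H^{\ast}(A)$, which yields surjectivity and injectivity simultaneously.

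Finally, to match the precise form of the definition used in this paper --- in which the neighborhoods $U$ are equipped with \emph{singular} cohomology --- I would invoke that every open subset $U$ of the manifold $M$ is an ANR (being locally contractible, paracompact, and finite-dimensional), so that singular, \v{C}ech, and Alexander--Spanier cohomology of $U$ all agree; the same identification applies to $A$ itself whenever $A$ is a neighborhood retract, as in the fourth case. I expect lemma~(i) --- fabricating an actual cochain on a neighborhood out of a germ near $A$, with simultaneous control of coboundaries --- to be the one step that requires genuine work; everything else is formal once that extension mechanism is in place.
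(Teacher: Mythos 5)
The paper itself offers no proof of Theorem~\ref{thm:taut}: it is stated as a classical result with citations to \cite{spanierTopology}, \cite{masseyTopology}, and \cite{hoferSymplectic}, and the text moves directly on to Section~\ref{sec:proofs}. Your proposal correctly identifies the statement as Spanier's tautness theorem and gives a faithful sketch of the standard argument --- pass to the intrinsic Alexander--Spanier cochain model, reduce the isomorphism to two germ-extension lemmas whose content is where compactness, paracompactness, normality, or the retraction enter, and reduce case~(iii) to case~(ii) by replacing $M$ with a paracompact open neighborhood. That is exactly the shape of the argument in Spanier, Chapter~6, and in the Hofer--Zehnder appendix, so at this level of resolution the proposal is both correct and consonant with what the cited sources do.

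One point you raise deserves emphasis, because it is the genuinely delicate spot in adapting the classical theorem to the definition used in this paper. The paper defines $\bar{H}^{\ast}(A)$ using \emph{singular} cohomology of the neighborhoods $U$, and tautness is a comparison with \emph{singular} cohomology of $A$. The classical tautness theorem compares Alexander--Spanier (equivalently \v{C}ech) cohomology on both sides. Since $M$ is a manifold, every open $U$ is an ANR, so $H^{\ast}_{\mathrm{sing}}(U)\cong\check{H}^{\ast}(U)$ and the direct limit is unambiguously $\check{H}^{\ast}(A)$. But $\check{H}^{\ast}(A)\cong H^{\ast}_{\mathrm{sing}}(A)$ is automatic only when $A$ itself is nice enough --- e.g.\ an ANR, as in case~(iv) --- and you are right to flag that the ANR identification applies to $A$ ``whenever $A$ is a neighborhood retract.'' For the first three cases the correct conclusion is $\bar{H}^{\ast}(A)\cong\check{H}^{\ast}(A)$; one should then either phrase the theorem in terms of \v{C}ech cohomology of $A$ (which is in fact the form used downstream in arguments of this type, including in \cite{hoferSymplectic}), or add a hypothesis ensuring $A$ is locally nice. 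This is an imprecision in how the theorem is transcribed in the paper rather than a defect in your argument; your sketch tracks the distinction more carefully than the paper's statement does.
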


\section{Proofs of Theorems \ref{thm:aspherical} and \ref{thm:weakly}}
\label{sec:proofs}
\

We are now in a position to present the proofs for Theorems ~\ref{thm:aspherical} and \ref{thm:weakly}.  We will begin by showing the result for weakly monotone symplectic manifolds and then present the aspherical one.

\begin{proof}[Proof of Theorem \ref{thm:weakly}]

We start by looking at the fixed points of $\phi_{H}$ which have associated action equal to $c^{\alpha}(H) = a$ and $c^{\alpha \ast \beta}(H) = b$ and call these sets $F_{a}$ and $F_{b}$ respectively.  Let
$\delta > 0$ be small and define $F_{(a - \delta, a + \delta)}$ and $F_{(b- \delta, b + \delta)}$ to be the set of all fixed points of $\phi_{H}$ that have their associated action in the interval $(a - \delta, a + \delta)$ and $(b - \delta, b + \delta)$ respectively.  We then take
$U^{a}_{\delta}$ and $U^{b}_{\delta}$ to be neighborhoods of the sets $F_{(a - \delta, a + \delta)}$ and $F_{(b - \delta, b + \delta)}$ and set $U_{\delta} = U^{a}_{\delta} \cup U^{b}_{\delta}$.  We want to show
$H^{k}(U_{\delta}) \neq 0$ for some $1 \leq k \leq 2n$ and for $\delta$ close to $0$.
\

Suppose not and that $H^{k}(U_{\delta}) = 0$ for all $0 < k \leq 2n$ in order to arrive at a contradiction.
Let $h:M \rightarrow \R$ be a $C^{2}$-small function on $M$ where $h$ is identically equal to zero on the
neighborhoods $U^{a}_{\delta}$ and $U^{b}_{\delta}$ and outside of these sets it is strictly negative.
We can approximate the function
$h$ by a sequence of Morse functions that are at least $C^{2}$-small, call them $h_{n}$, such that
$h_{n} \rightarrow h$ as $n \rightarrow \infty$ in the $C^{0}$-topology and for a fixed $x \in F_{a}$ and $y \in F_{b}$ we have $h_{n}(x) = 0$ and $h_{n}(y) = 0$ only at these points and strictly negative everywhere else.  By making use of the fact that
$c^{LS}_{PD(\eta)}(h_{n}) < 0$ for all $\eta \in H^{k}(M)$, with $k > 0$, and since $c^{\eta}$ is Lipschitz in the
$C^{0}$-topology we have $c^{\eta}(h) < - \delta_{h} < 0$ for all $\eta \in H^{k}(M)$ with $k > 0$ and
$\delta_{h}$ is a positive constant depending on the function $h$.  It is worth noting that we cannot say the same thing about $PD(\alpha)$ because it is possible that $PD(\alpha) = [M]$, which implies $c^{\alpha}(h) = c^{LS}_{[M]}(h) = max(h) = 0$.
\

Define $r: S^{1} \rightarrow \R$ to be a nonnegative, $C^{2}$-small function, equal to zero outside of a small
neighborhood of zero in $S^{1}$.  Set $f_{t} = r(t)h$.  This means that the Hamiltonian flow of $f$ will
be a reparametrization of the flow of $h$ through time $\epsilon = \int^{1}_{0}r(t)dt$.
\


Next we look at the family of Hamiltonians $H \# (s f)$ for $s \in [0,1]$.
By the construction of $f$ we have $H \# (s f) = H$ on the set $U_{\delta}$, but outside of the
set $U_{\delta}$ it is possible, for values of $s$ close to $1$, that $H \# (s f)$ has $1$-periodic
orbits, say $\bar{x}$ and $\bar{y}$, such that $c^{\alpha}(H \# (s f)) = \mathcal{A}_{H \# (s f)}(\overline{x}) \neq a$ or $c^{\alpha \ast \beta}(H \# (sf)) = \mathcal{A}_{H \# (sf)} (\overline{y}) \neq b$.  However, we claim that for small values of $s$ that we can prevent this situation from occurring.  In particular, we claim that one can find a nonzero $s'$ in $[0,1]$ such that for all $0 \leq s \leq s'$ the Hamiltonians
$H \# (s f)$ may have new $1$-periodic orbits such that their action is not in $\mathcal{S}(H)$ and that their
values may drift into the interval $(a - \delta, a + \delta)$, but by picking $s'$ small enough these
new critical values for $H \# (s' f)$ cannot drift into the neighborhoods
$(a - \frac{\delta}{2}, a + \frac{\delta}{2})$ and $(b - \frac{\delta}{2}, b + \frac{\delta}{2})$.  We will show this fact below in Lemma ~\ref{lemma:existence} and
suppose for the time being that such an $s'$ exists.  Then for all $0 \leq s \leq s'$ we have
$\mathcal{S}( H \# (s f) ) \cap (a - \frac{\delta}{2}, a + \frac{\delta}{2}) =
\mathcal{S}(H) \cap (a - \frac{\delta}{2}, a + \frac{\delta}{2})$ and $\mathcal{S}( H \# (s f) ) \cap (b - \frac{\delta}{2}, b + \frac{\delta}{2}) =
\mathcal{S}(H) \cap (b - \frac{\delta}{2}, b + \frac{\delta}{2})$.
\

Now, when $h$ and $r$ are sufficiently $C^{2}$- small, $\epsilon h$ and $f$ have the same periodic
orbits, which are the critical point of $h$, and they have the same action spectrum.
This is also true for the functions $\epsilon s' h$ and $s' f$.
The same will be true for every function in the linear family $\tilde{f}_{l} = (1-l)\epsilon s' h + l s' f$, with $l \in [0,1]$, connecting $\epsilon s' h$ and $s' f$.
Using the continuity property of $c_{u}$, the fact that each $\mathcal{S}(\tilde{f}_{l})$ is a set of measure zero,
and that $\mathcal{S}(\tilde{f}) = \mathcal{S}(\tilde{f}_{l})$ for all $l$, we conclude that
$c_{PD(\beta _{A})}(s' f) = c_{PD(\beta _{A})}(\epsilon s' h) < 0$ for each $PD(\beta _{A})$ in $PD(\beta)$.
\

We again use the continuity property
of $c^{\alpha}$ and that the sets $\mathcal{S}(H \# (sf))$ have measure zero for all $s$ to give us
$c^{\alpha}(H \# (s' f)) = c^{\alpha}(H) = a$.  By the construction of $H \# (s'f)$, we also have
$c^{\alpha \ast \beta}(H \# (s' f)) = c^{\alpha \ast \beta}(H)$ as well.  We then use the following triangle inequality for action selectors to give
\begin{align*}
c^{\alpha \ast \beta}(H) &= c^{\alpha \ast \beta}(H \# (s' f)) \leq c^{\alpha}(H) + c^{\beta}(s'f) \\
             & = c^{\alpha}(H) + c_{PD(\beta)}(s'f) \\
              & = c^{\alpha}(H) + c_{\sum _{A} PD(
              \beta_{A}) e^{A}}(s'f) \leq c^{\alpha}(H) + \max \{ c_{PD(\beta_{A}) e^{A}}(s'f) \, | \, PD(\beta_{A})\neq 0 \} \\
 &= c^{\alpha}(H) + c_{PD(\beta_{A}) e^{A}}(s'f) \\
 & = c^{\alpha}(H) + c^{LS}_{PD(\beta_{A})}(s'f) + I^{h}_{\omega}(e^{A})\\
 & < c^{\alpha}(H) + I^{h}_{\omega}(e^{A}) \\
 & \leq c^{\alpha}(H) + I^{h}_{\omega}(PD(\beta)) \\
 & = c^{\alpha}(H) - I^{c}_{\omega}(\beta).
\end{align*}
\no and creates a contradiction to the fact that $c^{\alpha \ast \beta}(H) = c^{\alpha}(H) - I^{c}_{\omega}(\beta)$.  This implies that $H^{k}(U_{\delta}) \neq 0$ for $k = deg(\beta _{A})$

\begin{Remark}
A quick observation about the $\max \{ c_{PD(\beta_{A}) e^{A}}(s'f) \, | \, PD(\beta_{A})\neq 0 \}$ term above.  Technically we should write $sup$, but since we have the identity
$c_{PD(\beta_{A})e^{A}}(s'f) = c^{LS}_{PD(\beta_{A})}(s'f) + I^{h}_{\omega}(e^{A})$, $M$ is a compact manifold, and because we are working with the upwards Novikov ring (along with the $I^{h}_{\omega}$ valuation) we will actually end up with the $sup$ being one of the $c_{PD(\beta_{A})e^{A}}(s'f)$ terms.
This means that we can actually look at the $max$ rather than the $sup$ in the above string of inequalities and also for notational simplicity we just labeled this maximum to be $c_{PD(\beta_{A}) e^{A}}(s'f)$.
\end{Remark}
\

Now let $\mathcal{O}_{F'}$ be a directed system of neighborhoods for the set $F' = F_{a} \cup F_{b}$.  Then
Theorem ~\ref{thm:taut} along with the basic properties outlined in Section ~\ref{sec:Spanier} almost immediately implies that $\beta_{| F} \neq 0$ in $HQ^{\ast}(F)$ and that
$H^{k}(F) \neq 0$ for $k = deg(\beta_{A})$, which proves Theorem ~\ref{thm:weakly}.
\end{proof}
\

With the above in mind, we are able to prove Theorem ~\ref{thm:aspherical}.

\begin{proof}[Proof of Theorem \ref{thm:aspherical}]

First recall the assumption that $M$ is symplectically aspherical, let $CL(M) = m$ and let $\alpha_{1}, \cdots, \alpha_{m}$ be cuplength representative in $H^{\ast > 0}(M)$.
Using a result from \cite{ginzburgAction} which in the symplectically aspherical case says that for $\alpha, \beta \in H^{\ast}(M)$ with $deg(\beta) > 0$
we have $c^{\alpha \cup \beta}(H) \leq c^{\alpha}(H)$.  This gives us the following monotonically decreasing sequence
$$c^{\tilde{\alpha}_{m}}(H) \leq \cdots \leq c^{\tilde{\alpha}_{1}}(H) \leq c^{\tilde{\alpha}}(H)$$
\no with
$$\tilde{\alpha} = PD([M]), \, \, \tilde{\alpha}_{1} = \alpha_{1}, \, \tilde{\alpha}_{2} = \alpha_{2} \cup \tilde{\alpha}_{1}, \ldots , \, \tilde{\alpha}_{m} = \alpha_{m} \cup \tilde{\alpha}_{m-1}. $$
\no Since each $c^{\beta}(H) \in \Ss (H)$ and $\# \Ss (H) \leq m$ it implies there must be equality somewhere in the
above chain of inequalities.  So, $c^{\tilde{\alpha}_{i + 1}}(H) = c^{\tilde{\alpha}_{i}}(H)$
for some $1 \leq i \leq m$, or $\tilde{\alpha}_{i} = PD([M])$.
Since $\tilde{\alpha}_{i + 1} = \alpha_{i + 1} \cup \tilde{\alpha}_{i}$ we just rename $\alpha_{i + 1} = \beta$ and
$\tilde{\alpha_{i}} = \alpha$ for notational convenience.  This means
$c^{\alpha \cup \beta}(H) = c^{\alpha}(H)$ and as we have pointed out in Section ~\ref{sec:quantum} the quantum
product in the symplectically aspherical case reduces to the cup product, i.e. $\alpha \ast \beta = \alpha \cup \beta$, so we can apply Theorem ~\ref{thm:weakly}, which immediately gives us our result.
\end{proof}

In Theorem \ref{thm:weakly} we needed to show we can find some nonzero $s'$ in the unit interval which satisfies the property that $\mathcal{S}(H \# (s'f))$ does not gain any new critical points within
either of the intervals $(a - \frac{\delta}{2}, a + \frac{\delta}{2})$ or $(b - \frac{\delta}{2}, b + \frac{\delta}{2})$.  We will show that this is true for only a single interval $(a- \frac{\delta}{2}, a + \frac{\delta}{2})$, since the proof generalizes to the case when there are two intervals.

\begin{Lemma}
\label{lemma:existence}
There exists some nonzero $s'$ in $[0,1]$ such that $\mathcal{S}(H \# (s f))$ does not
gain any new critical values within the interval $(a - \frac{\delta}{2}, a + \frac{\delta}{2})$ for all
$0 \leq s \leq s'$.
\end{Lemma}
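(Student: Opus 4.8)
The plan is to argue by contradiction via a compactness argument for one-periodic orbits as the perturbation parameter goes to zero. Suppose no such $s'$ exists; then I can extract a sequence $s_{k} \searrow 0$ together with, for each $k$, a capped contractible one-periodic orbit $\bar{x}_{k} = (x_{k}, u_{k})$ of $H \# (s_{k} f)$ whose action $c_{k} := \mathcal{A}_{H \# (s_{k} f)}(\bar{x}_{k})$ lies in $(a - \frac{\delta}{2}, a + \frac{\delta}{2})$ but not in $\mathcal{S}(H)$ --- that is, a genuinely new critical value sitting in the smaller interval. The goal is to show such orbits cannot exist for $k$ large, by proving that they must in fact be one-periodic orbits of $H$ itself, whose action therefore lies in $\mathcal{S}(H)$.

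First I would establish $C^{0}$-convergence of the underlying orbits. Since $f$ is fixed and $s_{k} \to 0$, the Hamiltonians $H \# (s_{k} f)$ converge to $H$ in $C^{\infty}$ uniformly in $t$, so the maps $\phi^{t}_{H \# (s_{k} f)} = \phi^{t}_{H} \circ \phi^{t}_{s_{k} f}$ converge to $\phi^{t}_{H}$ uniformly on $M$ and uniformly in $t \in [0,1]$. Passing to a subsequence so that $x_{k}(0) \to p$, I get $x_{k}(t) = \phi^{t}_{H \# (s_{k} f)}(x_{k}(0)) \to \phi^{t}_{H}(p) =: x_{\infty}(t)$ uniformly in $t$; since $\phi^{1}_{H}(p) = p$, the loop $x_{\infty}$ is a contractible one-periodic orbit of $H$ (it is a $C^{0}$-limit of contractible loops).

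Next I would track the actions. For $k$ large $x_{k}$ and $x_{\infty}$ are $C^{0}$-close, so they cobound a thin cylinder $v_{k}$ with $\bigl| \int_{v_{k}} \omega \bigr| \to 0$; gluing $v_{k}$ onto $u_{k}$ produces a capping $u^{\infty}_{k}$ of $x_{\infty}$, and a short estimate using $\| (H \# (s_{k} f))_{t} - H_{t} \|_{C^{0}} \to 0$ and the $C^{0}$-convergence of orbits shows $\mathcal{A}_{H}(x_{\infty}, u^{\infty}_{k}) - c_{k} \to 0$. Each $\mathcal{A}_{H}(x_{\infty}, u^{\infty}_{k})$ is a critical value of $\mathcal{A}_{H}$, hence lies in $\mathcal{S}(H)$, and since $c_{k} \to c_{\infty} \in [a - \frac{\delta}{2}, a + \frac{\delta}{2}] \subset (a - \delta, a + \delta)$ I conclude that for $k$ large $\mathcal{A}_{H}(x_{\infty}, u^{\infty}_{k}) \in (a - \delta, a + \delta)$. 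Therefore $p = x_{\infty}(0)$ is a fixed point of $\phi_{H}$ with an associated action in $(a - \delta, a + \delta)$, so $p \in F_{(a - \delta, a + \delta)} \subseteq U^{a}_{\delta}$. Because $U^{a}_{\delta}$ is open and $x_{k}(0) \to p$, we have $x_{k}(0) \in U^{a}_{\delta}$ for $k$ large; but on $U^{a}_{\delta}$ the function $h$, hence $f_{t} = r(t) h$, vanishes identically, so $X_{s_{k} f_{t}} \equiv 0$ there and $\phi^{t}_{s_{k} f}$ fixes $x_{k}(0)$ for all $t$. This forces $x_{k}(t) = \phi^{t}_{H}(x_{k}(0))$ to be a one-periodic orbit of $H$, and since $(\phi^{t}_{H})^{-1}(x_{k}(t)) = x_{k}(0)$ lies in $\{ f_{t} = 0 \}$ one computes $\mathcal{A}_{H \# (s_{k} f)}(\bar{x}_{k}) = \mathcal{A}_{H}(x_{k}, u_{k})$. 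Hence $c_{k} \in \mathcal{S}(H)$, contradicting the choice of $\bar{x}_{k}$. (Running the same argument with $(b - \frac{\delta}{2}, b + \frac{\delta}{2})$ in place of $(a - \frac{\delta}{2}, a + \frac{\delta}{2})$, and taking the smaller of the two values of $s'$, gives the two-interval version used in Theorem \ref{thm:weakly}.)

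The step I expect to be the main obstacle is the action bookkeeping of the third paragraph: one must verify that the new critical values $c_{k}$ actually converge to a bona fide action value of $x_{\infty}$ as an orbit of $H$, and that this limit is trapped inside the \emph{open} interval $(a - \delta, a + \delta)$. This is exactly where the cushion between $\delta/2$ and $\delta$ is spent: the limiting capping $u^{\infty}_{k}$ genuinely depends on $k$, so only its action value --- not its homotopy class --- can be controlled, and without the cushion the limit could land on the boundary, where $F_{(a-\delta,a+\delta)}$, and hence $U^{a}_{\delta}$, need not contain $p$.
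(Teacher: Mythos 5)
Your proof is correct and follows the same overall strategy as the paper's: argue by contradiction, extract a sequence $s_k \searrow 0$ together with one-periodic orbits $x_k$ of $H\#(s_k f)$ carrying the allegedly new critical values, use compactness of $M$ (Arzel\`a--Ascoli, which you invoke implicitly via the flow formula $x_k(t)=\phi^t_{H\#(s_kf)}(x_k(0))$) to pass to a $C^0$-limit orbit $x_\infty$ of $H$, and exploit that $h\equiv 0$ on $U^a_\delta$ to localize the situation. The difference is in how the contradiction is arranged. The paper first argues that the initial points $x_k(0)$ must lie \emph{outside} $U^a_\delta$, then observes that the limit $p$ therefore lies in the closed complement of $U^a_\delta$, while on the other hand $p$ is a fixed point of $\phi_H$ with action $a$, hence should lie in $F_a\subset U^a_\delta$. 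You instead show the limit $p$ lies \emph{inside} $U^a_\delta$, deduce that $x_k(0)\in U^a_\delta$ for large $k$ by openness, and then conclude that $x_k$ is actually an orbit of $H$ with $c_k\in\Ss(H)$, contradicting that $c_k$ was new. These are equivalent, but your rearrangement handles two points the paper leaves implicit: (i) you only need $c_k$ to accumulate somewhere in $[a-\tfrac{\delta}{2},a+\tfrac{\delta}{2}]$ rather than to converge to $a$ itself, which is exactly what the negation of the lemma gives; and (ii) you explicitly produce the capping $u^\infty_k$ of $x_\infty$ by gluing a thin cylinder onto $u_k$ and track that $\int_{v_k}\omega\to 0$, whereas the paper's $\varepsilon/3$ estimate on $|\Aa_H(\bar{x}_\ast)-\Aa_H(\bar{x}_{n_j})|$ quietly assumes the cappings have been matched up. Your closing remark correctly identifies this capping bookkeeping, and the role of the $\delta/2$--to--$\delta$ cushion, as the crux of the argument.
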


\begin{proof}
Suppose not and we cannot find such a number $s'$.  This means we can find a sequence of $s_{n}$ in $[0,1]$
where $s_{n} \rightarrow 0$ as $n \rightarrow \infty$ and that there exists a one-periodic orbit $x_{n}$
of $X_{H \# (s_{n}f)}$ such that $\mathcal{A}_{H \# (s_{n})f}(\overline{x}_{n}) = a_{n}$ with
$lim_{n \rightarrow \infty} a_{n} = a$.  Now, since $H \# (s_{n}f) = H$ on the set $U^{a}_{\delta}$, it means
the fixed points for $\phi_{H \# (s_{n}f)}$, with associated action $a_{n} \in \Ss (H \# (s_{n}f))$, can't be elements of the set $U^{a}_{\delta}$.
\

Our next step is to show we can find a one-periodic orbit $x_{\ast}$ for
$X_{H}$ with $\mathcal{A}(\overline{x}_{\ast}) = a$ that comes from some subsequence of the $x_{n}$'s.
In order to show this we will use the generalized Arzela Ascoli theorem for metric spaces which says the following:
If $X_{1}$ is compact Hausdorff space, $X_{2}$ is a metric space, $C(X_{1}, X_{2})$ be the set of continuous functions from $X_{1}$ to $X_{2}$, and let $\{ f_{n} \}$ be a sequence of functions in $C(X_{1}, X_{2})$ that is
uniformly bounded and equicontinuous, then there exists a subsequence $\{ f_{n_{j}} \}$ that converges uniformly.
We apply this to our capped loops $\bar{x}_{n}$, taking $X_{1} = [0,1]$ and $X_{2} = M$.  Let $d$ be the distance function that comes from the Riemannian metric $g$ on $M$. We want to first show that there exists some real number
$L > 0$ such that $d(x_{n}(t), x_{n}(s)) \leq L | t - s |$ for all $n$.  Note that since the manifold $M$
is compact that there is a uniform bound on the $X_{H \#(s_{n}f)}$ where
$\| X_{H \# (s_{n}f)} \| \leq L$ for some $L > 0$ and for all $n$.  Since the distance between two points $p,q \in M$ is given by $d(p,q) = \inf_{\gamma}(L(\gamma))$ for $L(\gamma) = \int^{b}_{a} \| \dot{\gamma}(t) \| dt$ we have
$$ d(x_{n}(t), x_{n}(s)) \leq \int^{t}_{s} \| \dot{x}_{n}(u) \| du = \int^{t}_{s} \| X_{H \# (s_{n}f)}(x_{n}) \| du
\leq L | t - s | .$$
\no This shows that the family of curves $\{ x_{n} \}$ is uniformly Lipschitz, which implies
that this family of curves is uniformly bounded and equicontinuous.  This means there is a subsequence $\{ x_{n_{j}} \}$ that converges to the curve $x_{\ast}$.  The curve $x_{\ast}$ is only a continuous loop from $[0,1]$ to $M$, but we can use the following
result which tells us that $x_{\ast}$ is actually a smooth solution to $X_{H}$.
\

%

\begin{Proposition}
Assume that the sequence of Hamiltonian vector fields $X_{H_{n}} \rightarrow X_{H}$ as $n \rightarrow \infty$ in
the $C^{0}$-topology and $x_{n}$ is a solution to $X_{H_{n}}$ and $x_{n} \rightarrow x_{\ast}$ in the $C^{0}$-topology.
Then $x_{\ast}$ is a solution to $X_{H}$.
\end{Proposition}

This means $x_{\ast}$ is a one-periodic solution to $X_{H}$.  Our next step is to show that
$\mathcal{A}(H)(\overline{x}_{\ast}) = a$.
Let $\epsilon > 0$.  Since $\mathcal{A}_{H \# (s_{n} f)}(\overline{x}_{n}) = a_{n}$ we can find some
$N_{1} $ such that for all $n > N_{1}$ we get
$|a_{n} - a | < \frac{\epsilon}{3}$.  At the same time, the Hamiltonians $H \# (s_{n}f) \rightarrow H$ in the
$C^{1}$-topology and we can find some $N_{2}$ such that for all $n > N_{2}$ we have
$|\mathcal{A}_{H}(\overline{x}) - \mathcal{A}_{H \# (s_{n}f)}(\overline{x}) | < \frac{\epsilon}{3}$.
Lastly, since the $x_{n_{j}}$ converge uniformly to the one-periodic solution $x_{\ast}$ of $X_{H}$ we can find some $N_{3}$ such that for all $n_{j} > N_{3}$ we get that
$|\mathcal{A}_{H}(\overline{x}_{\ast}) - \mathcal{A}_{H}(\overline{x}_{n_{j}})| < \frac{\epsilon}{3}$.  Then for
$N = max \{ N_{1}, N_{2}, N_{3} \}$ we have for $n > N$ that
\begin{align*}
|\mathcal{A}_{H}(\overline{x}) - a| \leq & \, \, | \mathcal{A}_{H}(\overline{x}_{\ast}) - \mathcal{A}_{H}(\overline{x}_{n_{j}})|
+ | \mathcal{A}_{H}(\overline{x}_{n_{j}}) - \mathcal{A}_{H \# (s_{n_{j}}f)}(\overline{x}_{n_{j}}) | + \\
    & \, \, |\mathcal{A}_{H \# (s_{n_{j}}f)}(\overline{x}_{n_{j}}) - a | < \epsilon.
\end{align*}
Since this is true for every $\epsilon > 0$ it gives $\mathcal{A}_{H}(\overline{x}_{\ast}) = a$.
\

Our next step is to show that the fixed point $x_{\ast}(0) = x_{\ast}(1)$ for $\phi_{H}$ that has associated action
$\mathcal{A}_{H}(\overline{x}_{\ast}) = a$ is a point that is outside of the $U^{a}_{\delta}$.  In order to do this we will look at the other fixed points
$p_{n_{j}}$ for $\phi_{H \# (s_{n_{j}}f)}$ that come from the loops $x_{n_{j}}$.  In order to simplify the notation
we will just relabel the points $p_{n_{j}}$ to be $p_{n}$. Now, since $M$ is a compact metric space we know that it is sequentially compact, meaning any sequence $\{ y_{n} \}$ has a convergent subsequence $\{ y_{n_{j}} \}$, and that the
collection of points $\{ p_{n} \}$ has a convergent subsequence $\{ p_{n_{j}} \}$ that converges to the point
$p$.  In fact, the limit point $p$ is a fixed point for $\phi_{H}$, which we will show.
Let $\epsilon > 0$ and we show that $d(\phi_{H}(p),p) < \epsilon$.
Since $s_{n}f \rightarrow 0$ pointwise as $n \rightarrow \infty$ and since $\phi_{H}$ is continuous it implies that
$$\phi_{H \# (s_{n_{j}}f)} = \phi_{H} \circ \phi_{s_{n_{j}}f} \rightarrow \phi_{H}$$
\no pointwise as $j \rightarrow \infty$.  Then there exists some $N_{1}$ such that for all $n_{j} > N_{1}$ we have
$d(\phi_{H}(p), \phi_{H \# (s_{n_{j}}f)}(p)) < \frac{\epsilon}{3}$.  We can also find some $N_{2}$ such that
for all $n_{j} > N_{2}$ that
$$d(\phi_{H \# (s_{n_{j}}f)}(p), \phi_{H \# (s_{n_{j}}f)}(p_{n_{j}})) < \frac{\epsilon}{3}$$
\no and we can find an
$N_{3}$ such that for all $n_{j} > N_{3}$ we get $d(p_{n_{j}}, p) < \frac{\epsilon}{3}$.  For
$n_{j} > N = max \{N_{1}, N_{2}, N_{3} \}$ we end up with
\begin{align*}
d(\phi_{H}(p), p) \leq & \, \, d(\phi_{H}(p), \phi_{H \# (s_{n_{j}}f)}(p)) + \\
                                 & \, \, d(\phi_{H \# (s_{n_{j}}f)}(p) , \phi_{H \# (s_{n_{j}}f)}(p_{n_{j}}) ) + d(p_{n_{j}}, p) < \epsilon.
\end{align*}
\no So, $p$ is a fixed point for $\phi_{H}$.
\

Let $x$ be the loop formed by the curve $\phi^{t}_{H}(p)$ for $0 \leq t \leq 1$.  Since both $x_{\ast}$ and
$x$ are one-periodic solutions for $X_{H}$ and they both have the point $p$ on them, then by uniqueness of
solutions of O.D.E.'s it forces $x_{\ast} = x$.  Then, we end up with $p$ being a fixed point of $\phi_{H}$, which is
on the curve $x$, and has the associated action $\mathcal{A}_{H}(\overline{x}) = a$.  However, we have that
$p$ is the limit point of the points $p_{n_{j}}$ and we know that $p_{n_{j}} \not\in U^{a}_{\delta}$
for all $n$ and means that $p \not\in U^{a}_{\delta}$, which creates a contradiction.
\end{proof}

\medskip
 \bibliographystyle{alpha}  
 \bibliography{hamiltonian-references}


\end{document}